\documentclass[11pt]{article}
\usepackage{amssymb,latexsym,amsmath,amsbsy,amsthm,amsxtra,amsgen}
\oddsidemargin=0in
\evensidemargin=0in
\topmargin=-.4in
\textheight=131ex
\textwidth=42em
\baselineskip=16pt

\newfont{\msbm}{msbm10 at 11pt}

\newcommand {\Z} {\mbox{\msbm Z}}

\def\be{\begin{equation}}
\def\ee{\end{equation}}
\def\ba{\begin{align}}
\def\ea{\end{align}}

\newtheorem{Theo}{Theorem}
\newtheorem{Lemma}[Theo]{Lemma}

\newtheorem{Prop}[Theo]{Proposition}

\begin{document}

\title{Lower Bound on the Rate of Adaptation in an Asexual Population}
\author{Michael Kelly\footnote{Simpson University, 2211 College View Drive, Redding, CA 96003, E-mail address: mkelly@simpsonu.edu \newline AMS 2010 Subject Classifications. Primary 92D15; Secondary 60J27, 60K35, 92D10. \newline Keywords: Evolutionary process, Moran model, selection, adaptation rate.}}

\maketitle

\begin{abstract}
We consider a model of asexually reproducing individuals with random mutations and selection. The rate of mutations is proportional to the population size, $N$. The mutations may be either beneficial or deleterious. In a paper by Yu, Etheridge and Cuthbertson (2009) it was conjectured that the average rate at which the mean fitness increases in this model is $O(\log N/(\log\log N)^2)$.  In this paper we show that for any time $t > 0$ there exist values $\epsilon_N \rightarrow 0$ and a fixed $c > 0$ such that  the maximum fitness of the population is greater than $cs\log N/(\log\log N)^2$ for all times $s \in [\epsilon_N,t]$ with probability tending to 1 as $N$ tends to infinity.
\end{abstract}

\section{Introduction}
In this paper we consider an evolutionary model of a population of individuals that was first introduced by Yu, Etheridge and Cuthbertson \cite{YEC}. The model has parameters $N,q,\mu$ and $\gamma$. The population size is a fixed integer $N > 0$. The rate at which each individual gets mutations is $\mu > 0$ and each mutation is beneficial with probability $q$, where $0 < q \leq 1$, and deleterious with probability $1-q$. Each beneficial mutation increases an individual's selective advantage by $\gamma > 0$.

To make the description precise, the stochastic process we are interested in is
$$(X_t : t \geq 0) \mbox{ where } X_t = (X_t^1,X_t^2, \dots, X_t^N) \in \Z^N$$
is an $N$-dimensional vector for $t \geq 0$. Each coordinate represents the fitness of an individual in the population at time $t$. For all coordinates $i,j \in \{1,2,\dots,N\}$ the model has the following transition rates:
\begin{align*}
& X^i \rightarrow X^i+1  \mbox{ at rate } q\mu, \\
& X^i \rightarrow X^i-1 \mbox{ at rate } (1-q)\mu, \\
& X^i \rightarrow X^j \mbox { at rate } \frac{1}{N}, \\
& X^i \rightarrow X^j \mbox { at rate } \frac{\gamma}{N}(X^j-X^i) \vee 0.
\end{align*}
The first two transition rates correspond to beneficial and deleterious mutations respectively. The rate $1/N$ at which $X^i \rightarrow X^j$ is the resampling mechanism. At rate one an individual is chosen to give birth uniformly at random and an individual is chosen to die uniformly at random. The rate $\gamma(X^j - X^i)/N \vee 0$ at which $X^i \rightarrow X^j$ is the selection mechanism. Mutation, resampling and selection events occur independently of one another.

With the vector valued process $X$ in mind we can think of $X^i$ as the fitness of individual $i$. That is, if individual 1 dies and individual 2 gives birth, rather than thinking of the population as having lost one individual and gaining a new individual we think of individual 1 as adopting the fitness of individual 2. From the perspective of $X$ as a vector valued process, to say individual $i$ has died and individual $j$ has given birth we only mean that the value of $X^i$ has been changed to the value of $X^j$. This way, we can continue referring to the individual whose fitness is recorded in coordinate $i$ as the $i$th individual even though from the biological perspective the individual whose fitness is in the $i$th coordinate will change over time due to death and birth events.

It is important to note that with the vector valued perspective in mind, when we say individual $i$ dies due to resampling we mean that the value in coordinate $i$ has changed its value to a randomly chosen coordinate. By the dynamics of the model, such events occur in each coordinate at rate 1. Each individual can only decrease in fitness when it gets a deleterious mutation or dies due to a resampling event (although death due to a resampling event does not necessarily mean that the fitness of the individual will decrease). The rate at which events occur that may cause an individual's fitness to decrease is
$$d = (1+q)\mu + 1.$$
The value $d$ is the same for each individual and does not depend on the population size $N$.

\subsection{Previous Results}

Let
$$\overline{X}_t = \frac{1}{N} \sum_{i=1}^N X_t^i$$
be the mean fitness of the process at time $t$. Let $(X_t^C : t \geq 0)$ be the stochastic process centered about its mean so that the $i$th coordinate of $X_t^C$ is
$$X_t^{i,C} = X_t^i - \overline{X}_t.$$
It was shown in \cite{YEC} that the centered process is ergodic. Let $\pi$ be the stationary distribution. If we choose $X_0^C$ according to the stationary distribution then the expected variance of the fitnesses,
$$E^\pi [c_2] \mbox{ where } c_2 = \frac{1}{N}\sum_{i=1}^N (X_t^{i,C})^2,$$
does not change in time. Equation (12) of \cite{YEC} states that
$$E^\pi[\overline{X}_t] = (\mu(2q-1)+sE^\pi[c_2])t$$
where $E^\pi$ means that the initial value of the process was chosen according to its stationary distribution. This establishes a constant rate of increase of $E^\pi[\overline{X}_t]$ in time. Computing $E^\pi[c_2]$ would give the rate of adaptation. This sort of technique was used by Desai and Fisher \cite{DF} for a related model. However, computing $E^\pi[c_2]$ is a difficult problem so we take another approach.

A heuristic argument in \cite{YEC} showed the rate of adaptation should be $O(\log N/(\log\log N)^2)$. They were able to establish a rigorous lower bound which showed $E[\overline{X}_t]$ increases at a rate of $O(\log^{1-\delta} N)$ as $t$ tends to infinity for large enough values of $N$. In Kelly \cite{ROAUp} it was shown that
$$\frac{E[\overline{X}_t]}{t} \leq \frac{C\log N}{(\log\log N)^2}$$
for some constant $C > 0$.

Similar results have been conjectured for the rate of adaptation in the biological literature. For examples, see Rouzine, Brunet and Coffin \cite{RBC}, Brunet, Rouzine and Wilke \cite{BRW} and Rouzine, Brunet and Wilke \cite{RBW}. Technical, but not rigorous, arguments are given for the mean rate of adaptation of similar models. The arguments rely on two assumptions. First, one assumes that the bulk of the population behaves deterministically and the random noise is concentrated only in the highest fitness classes. Second, one assumes that the bulk of the population is shaped like a Gaussian curve. The one or two highest fitness classes behave as stochastic populations that are expected grow due to the selective advantage they have over the bulk of the population. The rate of adaptation can be estimated by computing the mean time to the creation of a new highest fitness class.

\subsection{Main Result}

Fix a time $t > 0$. Let
$$\mathcal{T} = \frac{16(\log\log N)^2}{\gamma \log N}$$
represent a time value and let $M$ be the integer such that
$$M < \frac{t}{2\mathcal{T}} \leq M+1.$$
Define
$$X_s^+ = \max\{X_s^i : 1 \leq i \leq N\}$$
for all $t \geq 0$.

\begin{Theo} \label{mainTheo}
Let
$$\mathcal{E} = \left\{X_s^+ \geq \frac{Ms}{2} \mbox{ for } s \in [\mathcal{T},t]\right\}.$$
Then
$$\lim_{N \rightarrow \infty} P(\mathcal{E}) = 1.$$
\end{Theo}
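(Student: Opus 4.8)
The plan is to show that, after a short warm-up, the leading edge of the fitness distribution advances by a definite number of fitness classes in every time window of length of order $\mathcal{T}$, and that a populated class is never lost before time $t$. By the translation invariance of the dynamics I may normalize the initial maximum to $0$, and since no claim is made on $[0,\mathcal{T})$ this first interval absorbs the time needed for the leading classes to form and for the empirical distribution to spread. I would partition $[\mathcal{T},t]$ into $O(M)$ consecutive intervals, each of length of order $\mathcal{T}$, and on the $j$th interval require that $X^+$ increase by at least a fixed amount while the number of individuals near the edge stays above a threshold. Writing $\mathcal{E}$ as essentially the intersection of these per-interval events with a persistence event, it suffices to bound the failure probability on a single interval: since $M$ is of order $\log N/(\log\log N)^2$, a single-interval failure probability of $o(1/M)$, for instance one that decays faster than a fixed negative power of $N$, closes the union bound. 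The factor $\tfrac{1}{2}$ in $Ms/2$ and the constant $16$ in $\mathcal{T}$ supply the slack that lets the guaranteed per-interval advance exceed the increment of the line $s \mapsto Ms/2$.

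The heart of the argument is the growth of the leading class. If at some time the maximal fitness is $k$ and $n$ individuals carry it, then by the selection and resampling rates each such individual copies its value onto a uniformly chosen coordinate $j$ at total rate at least $\frac{1}{N}\sum_j \bigl[\gamma(k-X^j)\vee 0\bigr] \ge \gamma(k-\overline{X}_s)$, so the class gains members at rate at least $n\gamma(k-\overline{X}_s)$ while losing them to deleterious mutations and resampling at rate at most $nd$. Everything hinges on a lower bound for the gap $k-\overline{X}_s$. I would obtain one by controlling the lower tail of the distribution: because deleterious mutations and resampling continually reseed low fitness values, one can show that with high probability a fixed fraction of the coordinates lag the edge by a gap $g$ of order $\log N/(\log\log N)^2$ throughout $[\mathcal{T},t]$, so that $k-\overline{X}_s \ge cg$ and each leading individual reproduces by selection at rate at least $\tfrac{1}{2}\gamma g$. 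The size of the leading class then stochastically dominates a supercritical birth--death process with per-capita birth rate $\tfrac{1}{2}\gamma g$ and death rate $d$. Producing this domination is the step I expect to be the main obstacle: the selection rate is coupled to the whole evolving empirical measure, so one must condition on a good event for the bulk, show it persists, and compare the edge to a tractable process simultaneously across all $O(M)$ stages with a failure probability small enough for the union bound. This is where the genuinely delicate estimates lie.

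Granting the domination, the remainder is a branching-process computation. A supercritical birth--death process started from one individual reaches size $m^{\ast}$ of order $\gamma g/(q\mu)$, and any fixed multiple thereof, within time of order $(\gamma g)^{-1}\log m^{\ast}$, which is $O(\mathcal{T})$ for the stated constants; indeed the $(\log\log N)^2$ in $\mathcal{T}$ is exactly this squared logarithmic growth time, arising from the self-consistent relations $g \sim \sqrt{c_2 \log N}$ and wave speed of order $\gamma g/\log(\gamma g/(q\mu))$ with $\log(\gamma g/(q\mu)) \sim \tfrac{1}{2}\log\log N$. While the class grows, its members are struck by beneficial mutations at cumulative rate $\int n_s\, q\mu\, ds$, which becomes large, so that many fitness-$(k+1)$ founders are produced; since each survives with probability bounded away from $0$, with high probability at least one establishes a persistent new leading class. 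Iterating within a single interval advances the edge by the required fixed number of classes except on an event of probability $o(1/M)$, obtained by tuning the thresholds and invoking concentration for the birth--death comparison. Finally, persistence between grid times holds because each established class contains, with high probability, enough individuals that the set $\{\, i : X_s^i \ge k \,\}$ cannot be emptied before time $t$; this yields $X_s^+ \ge Ms/2$ for every $s \in [\mathcal{T},t]$ and not merely at the endpoints, completing the proof modulo the estimates of the second paragraph.
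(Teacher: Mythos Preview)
Your outline captures the traveling-wave intuition but diverges from the paper's proof in ways that leave real gaps. The paper does \emph{not} attempt to prove that the edge--bulk gap stays large throughout $[0,t]$; instead it introduces a stopping time $\tau(i)$ for when the gap first falls below $\mathcal{W}=\log N/(8\log\log N)$ and runs a dichotomy: if $\tau(i)>s_i+\mathcal{T}$ the branching comparison applies, while if $\tau(i)\le s_i+\mathcal{T}$ then by definition at least $N/2$ individuals sit within $\mathcal{W}$ of the top, and a direct mutation argument (Lemma~\ref{smallWLem}) pushes two of them above $X_{s_i}^{\beta(i)}$. This sidesteps entirely the ``main obstacle'' you identify, which you leave unaddressed. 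Second, the paper does not use a union bound requiring $o(1/M)$ per-interval error. It conditions on the rate-$2d$ Poisson process $\mathcal{P}$ of death/deleterious events hitting the two tracked leaders $\alpha(i),\beta(i)$; since $2d$ is constant and $t$ is fixed, $\mathcal{P}\cap[0,t]$ has a bounded number $K$ of points, and with high probability no two fall in the same subinterval (Proposition~\ref{PLk}). Tracking \emph{two} top individuals guarantees that on every subinterval at least one survives, so the per-interval success probability merely needs to tend to $1$, after which a Chernoff bound over the $M-K$ clean intervals exploits the slack in $Ms/2$.

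There is also a concrete scaling error. You take the gap $g$ to be of order $\log N/(\log\log N)^2$, but that is the wave \emph{speed}, not the wave \emph{width}; the lead of the front over the bulk is of order $\log N/\log\log N$ (this is $\mathcal{W}$ in the paper). With your $g$ the per-capita selection rate is $\gamma g/2\asymp \log N/(\log\log N)^2$, so the time to grow a founder to size $m^\ast$ is $(\gamma g)^{-1}\log m^\ast\asymp (\log\log N)^3/\log N$, which is \emph{not} $O(\mathcal{T})$. With the correct width $\mathcal{W}$ one gets $w=\gamma\mathcal{W}/2$ and $w\mathcal{T}=\log\log N$, which is exactly what makes the branching calculations in Propositions~\ref{branchProp} and~\ref{AkProp} close. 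In short, the ingredients you are missing are the $\alpha/\beta$ two-leader device with conditioning on $\mathcal{P}$, the gap dichotomy replacing any a~priori gap control, and the correct order $\mathcal{W}\asymp\log N/\log\log N$ for the selective advantage.
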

Establishing Theorem \ref{mainTheo} shows that with probability tending to 1 the maximum fitness of the population is at least $O(\log N/(\log\log N)^2)$. The result differs from the results in \cite{YEC} and \cite{ROAUp} in that it is a convergence in probability rather than a convergence of the mean fitness. Also, the result agrees with the heuristic argument given in \cite{YEC}.

\section{Proof of the Theorem}

For the remainder of the paper $t > 0$ is a fixed time value. Define the sequence $(s_i : i \geq 0)$ by
$$s_i = 2i\mathcal{T}.$$
Let $M$ be the integer such that
$$s_M < t \leq s_{M+1}.$$
Note that $M$ tends to infinity as $N$ does because $t$ is fixed and $\mathcal{T}$ tends to 0 as $N$ tends to infinity.

At the times $s_i$ we wish to label an individual with maximum fitness. There may be multiple individuals that have the maximum fitness. To make a precise choice for an individual with maximum fitness but arbitrary with respect to which index, for $i \geq 0$ define
$$\alpha(i) = \min\{j : X_{s_i}^j = X_{s_i}^+\}.$$
When $j = \alpha(i)$ we say individual $j$ is labelled $\alpha$ over the timer interval $[s_i, s_{i+1})$. Likewise, we want to label an individual that is considered to be the second most fit individual in the population. The second most fit individual may have the same fitness as the most fit individual. For $i \geq 0$ define
$$\beta(i) = \min\{j \neq \alpha(i): X_{s_i}^j \geq X_{s_i}^k \mbox{ for } k \in \{1,2,\dots,N\}\backslash\{\alpha(i)\}\}.$$

We define a Poisson point process $\mathcal{P}_\alpha$ that marks times at which individuals labelled $\alpha$ may decrease in fitness. If individual $\alpha(i)$ gets a deleterious mutation or dies due to a resampling event at time $t \in [s_i, s_{i+1})$ there is a point of $\mathcal{P}_\alpha$ at time $t$. Similarly, we define a Poisson point process $\mathcal{P}_\beta$ for the individuals $\beta(i)$, $i \geq 0$. Both $\mathcal{P}_\alpha$ and $\mathcal{P}_\beta$ are Poisson point processes on $[0,\infty)$ of intensity $d$. The point processes take values on the measure space $(G,\mathcal{G})$ where
$$G = \{x \subset [0,\infty) : |x \cap B| < \infty \mbox{ for all } B \in \mathcal{B} \mbox{ such that } \lambda(B) < \infty\}$$
where $\mathcal{B}$ is the Borel $\sigma$-field on $[0,\infty)$ and $\lambda$ is the Lebesgue measure and
$$\mathcal{G} = \sigma(\{x \in G : |x \cap B| = m\} : B \in \mathcal{B}, m \geq 0).$$
Let $\mathcal{P} = \mathcal{P}_\alpha \cup \mathcal{P}_\beta$. Since $\mathcal{P}_\alpha$ and $\mathcal{P}_\beta$ are independent, $\mathcal{P}$ is a Poisson point process of rate $2d$ that marks times at which individuals labelled $\alpha$ or $\beta$ die. For $K \geq 0$ define
$$\Lambda_K = \{x \in G : |x \cap [0,s_{M+1}]| \leq K, |x \cap [0,s_1]| = 0 \mbox{ and } |x \cap [s_i,s_{i+1})| \leq 1 \mbox{ for } 1 \leq i \leq M\}.$$
Note that $\Lambda_K \in \mathcal{G}$ for $K \geq 0$.

\begin{Prop} \label{PLk}
Let $\epsilon > 0$. There exists $K \geq 0$ such that $P(\mathcal{P} \in \Lambda_K) > 1-\epsilon$ for large enough values of $N$.
\end{Prop}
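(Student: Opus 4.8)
The plan is to decompose membership in $\Lambda_K$ into its three defining requirements on the Poisson point process $\mathcal{P}$, which has the constant intensity $2d$, and to show that each requirement fails with probability less than $\epsilon/3$ for all large $N$; a final union bound then gives $P(\mathcal{P} \notin \Lambda_K) < \epsilon$. The three requirements are (i) $|x \cap [0,s_{M+1}]| \leq K$, (ii) $|x \cap [0,s_1]| = 0$, and (iii) $|x \cap [s_i,s_{i+1})| \leq 1$ for every $1 \leq i \leq M$. The elementary facts I will use repeatedly are that $\mathcal{T} \to 0$ and $M\mathcal{T} \to t/2$ as $N \to \infty$ (both immediate from $s_M < t \leq s_{M+1}$ and $\mathcal{T} \to 0$), together with the fact that the number of points of $\mathcal{P}$ in an interval of length $\ell$ is Poisson distributed with mean $2d\ell$.

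For requirement (ii), the number of points in $[0,s_1] = [0,2\mathcal{T}]$ is Poisson with mean $4d\mathcal{T}$, so the probability it is nonzero equals $1 - e^{-4d\mathcal{T}}$, which tends to $0$ and hence lies below $\epsilon/3$ once $N$ is large. For requirement (i), the count $|x \cap [0,s_{M+1}]|$ is Poisson with mean $2d\,s_{M+1}$, and since $s_{M+1} \to t$ this mean is bounded above by $2dt + 1$ for all large $N$. Using that the Poisson family is stochastically increasing in its parameter, I can first fix $K$ large enough that $P(\mathrm{Poisson}(2dt+1) > K) < \epsilon/3$, and this single choice of $K$ then controls requirement (i) uniformly for all large $N$.

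The remaining requirement (iii) is where the two competing limits $\mathcal{T}\to 0$ and $M \to \infty$ must be balanced, and I expect this to be the crux. The count in each interval $[s_i,s_{i+1})$ of length $2\mathcal{T}$ is Poisson with mean $4d\mathcal{T}$, and for a Poisson variable with parameter $\lambda$ one has the elementary bound $P(X \geq 2) = 1 - e^{-\lambda}(1+\lambda) \leq \lambda^2/2$, valid for all $\lambda \geq 0$ since the difference vanishes at $0$ and has derivative $\lambda(1-e^{-\lambda}) \geq 0$. A union bound over the $M$ intervals then gives
$$P\big(\exists\, i \in \{1,\dots,M\} : |x \cap [s_i,s_{i+1})| \geq 2\big) \leq M \cdot \frac{(4d\mathcal{T})^2}{2} = 8 d^2 M \mathcal{T}^2.$$
Because $M < t/(2\mathcal{T})$, the right-hand side is at most $4 t d^2 \mathcal{T}$, which tends to $0$ with $\mathcal{T}$, so requirement (iii) fails with probability below $\epsilon/3$ for large $N$. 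Combining the three estimates by the final union bound yields the assertion. The only point requiring care is the order of quantifiers: $K$ must be selected from the bounded limiting mean in (i) before sending $N \to \infty$, whereas (ii) and (iii) are genuine vanishing-probability statements that then hold for that same fixed $K$.
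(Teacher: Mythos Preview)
Your proof is correct, and it proceeds by a genuinely different route than the paper's. The paper conditions on the total number of points $\mathcal{P}[0,t+1]=k$, uses that given $k$ the points are i.i.d.\ uniform on $[0,t+1]$, and bounds the conditional probability that they avoid $[0,s_1]$ and all land in distinct subintervals by the product $\prod_{i=1}^{k}(t+1-2i\mathcal{T})/(t+1)$; this conditional probability tends to $1$ for each fixed $k\le K$ as $\mathcal{T}\to 0$, and combining with $P(\mathcal{P}[0,t+1]\le K)>1-\delta$ yields $(1-\delta)^2>1-\epsilon$. Your argument instead bounds the probability of each of the three defining failures directly via Poisson tail estimates and a union bound, with the key step being $M\cdot(4d\mathcal{T})^2/2\le 4td^2\mathcal{T}\to 0$ for requirement~(iii). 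Your approach is more elementary---it never invokes the conditional uniformity of Poisson points and keeps the three constraints decoupled---while the paper's conditioning argument packages the ``no-collision'' and ``avoid $[0,s_1]$'' constraints into a single sequential placement bound. Both reach the same conclusion with the same order of quantifiers (fix $K$ from the limiting mean, then send $N\to\infty$).
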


\begin{proof}
Let $\delta > 0$ such that $(1-\delta)^2 > 1-\epsilon$. For an interval $I$ we define $\mathcal{P}I$ to be the number of points of $\mathcal{P}$ in the interval $I$. Then $\mathcal{P}[0,s_{M+1}]$ has the Poisson distribution with mean $2ds_{M+1}$. Since
$$\lim_{N \rightarrow \infty} s_{M+1} = t,$$
for $N$ large enough $s_{M+1} < t+1$. Because $t+1$ is fixed, there exists $K$ large enough so that
$$\sum_{k=0}^KP(\mathcal{P}[0,t+1] = k) > 1-\delta.$$

We expand over the conditional probability to get
$$P(\mathcal{P} \in \Lambda_K) = \sum_{k=0}^K P(\mathcal{P} \in \Lambda_K|\mathcal{P}[0,t+1] = k)P(\mathcal{P}[0,t+1] = k).$$
On the event $\{\mathcal{P}[0,t+1] = k\}$ the $k$ points $U_1, \dots, U_k \in \mathcal{P}$ are i.i.d. and have the uniform distribution on $[0,t+1]$. Therefore,
$$P(\mathcal{P} \in \Lambda_K|\mathcal{P}[0,t+1] = k) \geq \prod_{i=1}^{k} \frac{t+1-2i\mathcal{T}}{t+1}.$$
Because $\mathcal{T}$ tends to 0 as $N$ tends to infinity, $P(\mathcal{P} \in \Lambda_K|\mathcal{P}[0,t+1] = k)$ tends to 1 for any fixed $k$. Since $K$ is fixed, for large enough values of $N$ we have
$$P(\mathcal{P} \in \Lambda_K|\mathcal{P}[0,t+1] = k) > 1-\delta \mbox{ for } 0 \leq k \leq K.$$
By our choice of $K$, for $N$ large enough
\begin{align*}
P(\mathcal{P} \in \Lambda_K) & = \sum_{k=0}^K P(\mathcal{P} \in \Lambda_K|\mathcal{P}[0,t+1] = k)P(\mathcal{P}[0,t+1] = k) \\
& > \sum_{k=0}^K (1-\delta)P(\mathcal{P}[0,t+1] = k) \\
& > (1-\delta)^2 \\
& > 1-\epsilon.
\end{align*}
\end{proof}

For $i \geq 0$ we define stopping times $\tau_\alpha(i)$, $\tau_\beta(i)$ and $\tau(i)$ so that we can couple the progeny of individual $\alpha(i)$ with a branching process over the time interval $[s_i, (s_i+\mathcal{T}) \wedge \tau_\alpha(i) \wedge \tau(i))$ and $\beta(i)$ with a branching process over the time interval $[s_i, (s_i+\mathcal{T}) \wedge \tau_\beta(i) \wedge \tau(i))$. Define
$$\tau_\alpha(i) = \inf\{s \geq s_i : s \in \mathcal{P}_\alpha\} \mbox{ and } \tau_\beta(i) = \inf\{s \geq s_i : s \in \mathcal{P}_\beta\}.$$
Define a process $(W_s : s \geq 0)$ by $W_s = k$ if at least $N/2$ individuals have fitness in $[k,\infty)$ at time $s$ but fewer than $N/2$ individuals have fitness in $(k,\infty)$ at time $s$. Let
$$\mathcal{W} = \frac{\log N}{8\log\log N}$$
and
$$\tau(i) = \inf\left\{s \geq s_i : X_{s_i}^{\beta(i)} - W_s < \mathcal{W}\right\}.$$

Let $(\mathcal{Z}_s : s \geq 0)$ be a multi-type branching process that initially has one particle of type 0. Particles have death rate $d$ and give birth to one offspring at rate 
$$w = \frac{\gamma \mathcal{W}}{2}.$$
Particles change from type-0 to type-1 at rate $q\mu$. Let $(\mathcal{Z}_s^\alpha(i), \mathcal{Z}_s^\beta(i) : i \geq 0)$ be a collection of i.i.d. branching processes each having the same distribution as $\mathcal{Z}$. We couple $X$ with the branching processes $\mathcal{Z}_s^\alpha(i)$ over the time intervals $[s_i, (s_i+\mathcal{T}) \wedge \tau_\alpha(i) \wedge \tau(i))$ as follows:\\

\begin{itemize}
\item At time $s_i$ the particle in $\mathcal{Z}_0^\alpha(i)$ is coupled with individual $\alpha(i)$. Over the time interval $[0,((s_i+\mathcal{T}) \wedge \tau_\alpha(i) \wedge \tau(i))-s_i)$ each particle of $\mathcal{Z}^\alpha(i)$ will be coupled with an individual in $X$.
\item Particles in $\mathcal{Z}$ increase in type by 1 at time $s$ if and only if the corresponding individual in $X$ gets a beneficial mutation at time $s_i+s$.
\item The one particle in $\mathcal{Z}_0^\alpha(i)$ dies at rate $d$ independent of the process $X$. Any other particle of $\mathcal{Z}^\alpha(i)$ dies at time $s$ if and only if the individual it is coupled with gets a deleterious mutation or dies due to a resampling event at time $s_i+s$.
\item To explain the branching events of $\mathcal{Z}^\alpha(i)$ we give another description of the selection mechanism. For each individual $i$ in $X$ define a Poisson process $\mathcal{P}^i$ on $[0,\infty) \times [0,\infty)$. The collection of Poisson processes $(\mathcal{P}^i : 1 \leq i \leq N)$ are independent of one another and each have intensity $\lambda \times \frac{\gamma}{N} \lambda$ where $\lambda$ is Lebesgue measure. For a fixed individual $j$, define
$$\Phi_s^j = \sum_{k=1}^N (X_s^j - X_s^k)^+.$$
Individual $j$ gives birth at time $s$ if there is a point of $\mathcal{P}^j$ at $(s,x)$ for some $x \leq \Phi_s^j$. On the event that individual $j$ gives birth individual $k$ is chosen to die with probability
$$\frac{(X_s^j - X_s^k)^+}{\Phi_s^j}.$$

Define $\Theta_s^j$ to be the sum of terms in $\Phi_s^j$ which are at least as large as $\mathcal{W}$:
$$\Theta_s^j = \sum_{k=1}^N (X_s^j - X_s^k) 1_{\{X_s^j - X_s^k \geq \mathcal{W}\}}.$$
For $s_i \leq s < \tau(i)$ we have $\{W_t \geq \mathcal{W}\}$ so
$$\Theta_s^j \geq \frac{N\mathcal{W}}{2}$$
for each individual $j$ such that $X_s^j \geq X_{s_i}^{\beta(i)}$.

If a particle in $\mathcal{Z}$ is coupled with individual $j$ then the fitness of individual $j$ is at least $X_{s_i}^\beta(i)$.  The particle will branch at time $s$ with probability
$$\frac{N\mathcal{W}}{2\Theta_s^j}$$
if individual $j$ gives birth at time $s_i+s$ and the individual $k$ chosen to die satisfies $X_s^j - X_s^k \geq \mathcal{W}$. The new particle is coupled with individual $k$. \end{itemize}
Similarly, the progeny of individual $\beta(i)$ is coupled with the branching process $\mathcal{Z}_s^\beta(i)$ over the time intervals $[s_i, (s_i+\mathcal{T}) \wedge \tau_\beta(i) \wedge \tau(i))$.

Let $(Z_s : s \geq 0)$ be the number of particles in the process $\mathcal{Z}$ of any type at time $s$. The following lemmas establish some results for the distribution of $\mathcal{Z}_s$.

\begin{Prop} \label{branchProp}
As $N$ tends to infinity
$$wP(Z_{\mathcal{T}} = 0) \rightarrow d \mbox{ and } (\log N) P(Z_{\mathcal{T}} = 1) \rightarrow 1.$$
\end{Prop}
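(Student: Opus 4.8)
The plan is to observe first that the type of a particle affects neither its birth rate $w$ nor its death rate $d$, and that the type-changes (at rate $q\mu$) neither create nor destroy particles. Hence the total count $(Z_s : s\geq 0)$, ignoring types entirely, is just a linear birth--death process started from a single particle, with per-particle birth rate $w$ and death rate $d$. This reduces the proposition to extracting the $s=\mathcal T$ asymptotics from the classical formulas for such a process.

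Write $r = w-d$ for the net growth rate. The generating function $F(z,s)=E[z^{Z_s}]$ of a linear birth--death process satisfies the Riccati equation $\partial_s F = (wF-d)(F-1)$ with $F(z,0)=z$; I would solve this (or simply cite the standard solution) and read off the coefficients to obtain
\[P(Z_s=0)=\frac{d\,(e^{rs}-1)}{w\,e^{rs}-d},\qquad P(Z_s=1)=\frac{r^2e^{rs}}{(w\,e^{rs}-d)^2}.\]
A convenient consistency check is that $\partial_zF(1,s)=e^{rs}$ reproduces the correct mean $E[Z_s]=e^{rs}$, and that both expressions give the right values at $s=0$.

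The rest is asymptotic bookkeeping driven by the single identity
\[w\mathcal T=\frac{\gamma\mathcal W}{2}\cdot\frac{16(\log\log N)^2}{\gamma\log N}=\log\log N,\]
so that $e^{-w\mathcal T}=1/\log N$. Since $(\log\log N)^2/\log N\to 0$ we also have $d\mathcal T\to 0$ and $q\mu\mathcal T\to 0$, whence $r\mathcal T=\log\log N-d\mathcal T\to\infty$ and $(\log N)e^{-r\mathcal T}=e^{d\mathcal T}\to 1$. Factoring $e^{r\mathcal T}$ out of the denominators at $s=\mathcal T$ gives $w\,P(Z_{\mathcal T}=0)=d\,\frac{1-e^{-r\mathcal T}}{1-(d/w)e^{-r\mathcal T}}\to d$, using $w\to\infty$ and $e^{-r\mathcal T}\to 0$. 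Similarly $P(Z_{\mathcal T}=1)=\frac{(w-d)^2}{w^2}\cdot\frac{e^{-r\mathcal T}}{(1-(d/w)e^{-r\mathcal T})^2}$, and since $(w-d)^2/w^2\to 1$ and $(d/w)e^{-r\mathcal T}\to 0$, multiplying by $\log N$ and using $(\log N)e^{-r\mathcal T}\to 1$ yields $(\log N)\,P(Z_{\mathcal T}=1)\to 1$.

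There is no genuine obstacle once the reduction to a linear birth--death process is in place; the only care needed is in the interplay of the three scales ($w\to\infty$, $d$ fixed, and $e^{-r\mathcal T}\sim 1/\log N$), ensuring that the prefactors $(w-d)^2/w^2$ and $(1-(d/w)e^{-r\mathcal T})^{-2}$ each tend to $1$ rather than contributing a spurious constant. The one point genuinely worth flagging is that $e^{-r\mathcal T}$ and $1/\log N$ differ by the factor $e^{d\mathcal T}$, so the second limit rests on showing $d\mathcal T\to 0$, which is precisely where the sub-logarithmic size of $\mathcal T$ enters.
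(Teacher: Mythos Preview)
Your proposal is correct and follows essentially the same approach as the paper: identify $Z_s$ as a linear birth--death process, use the classical generating function to read off $P(Z_s=0)$ and $P(Z_s=1)$ (your formula $\frac{r^2 e^{rs}}{(we^{rs}-d)^2}$ is exactly the paper's $(1-f(s))(1-g(s))$), and then invoke $w\mathcal{T}=\log\log N$ to get the limits. If anything, you are slightly more explicit than the paper about the distinction between $e^{-w\mathcal{T}}$ and $e^{-r\mathcal{T}}$ and the role of $d\mathcal{T}\to 0$, which the paper leaves implicit.
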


\begin{proof}
Let
\begin{align}
& f(s) = \frac{d(e^{(w-d)s}-1)}{w e^{(w-d)s}-d} \hspace{0.2in}\mbox{ and}\label{ffun} \\
& g(s) = \frac{w(e^{(w-d)s}-1)}{w e^{(w-d)s}-d} = 1 - \frac{w - d}{w e^{(w-d)s}-d}. \label{gfun}
\end{align}

We use the generating function of birth-death processes from page 109 of Athreya and Ney \cite{AN} with birth rate $w$ and death rate $d$ to get
$$F(x,s) = \frac{d(x-1)-(w x-d)e^{(d-w)s}}{w(x-1)-(w x-d)e^{(d-w)s}}$$
where
$$F(x,s) = \sum_{k=0}^\infty P(Z_s = k | Z_0 = 1)x^k.$$
Solving the equation
$$\frac{1}{k!} \left[\frac{\delta^k}{\delta x^k} F(x,s) \right|_{x = 0} = P(Z_s = k | Z_0 = 1)$$
we arrive at the formulas
\begin{align}
& P(Z_s = 0| Z_0 = 1) = f(s) \hspace{0.2in}\mbox{ and} \nonumber \\
&  P(Z_s = i|Z_0 = 1) = (1-f(s))(1-g(s))g(s)^{i-1} \mbox{ for } i \geq 1.\label{popEq}
\end{align}

The probability that the process dies by time $\mathcal{T}$ is then
$$f(\mathcal{T}) = \frac{d(e^{(w-d)\mathcal{T}}-1)}{w e^{(w-d)\mathcal{T}}-d}.$$
Because $w\mathcal{T} = \log\log N$ tends to infinity with $N$ we have
$$\lim_{N \rightarrow \infty} wP(Z_{\mathcal{T}} = 0) = \lim_{N \rightarrow \infty} wf(\mathcal{T}) = d.$$

The probability that there is exactly one individual at time $\mathcal{T}$ is 
$$(1-f(\mathcal{T}))(1-g(\mathcal{T})) = (1-f(\mathcal{T})) \cdot \frac{w-d}{w e^{(w-d)\mathcal{T}}-d}.$$ 
By definition,
$$\frac{1}{e^{w\mathcal{T}}} = \frac{1}{\log N}.$$
Therefore,
$$\lim_{N \rightarrow \infty} (\log N)P(Z_\mathcal{T} = 1) = \lim_{N \rightarrow \infty} (\log N)(1-f(\mathcal{T}))(1-g(\mathcal{T})) = 1.$$
\end{proof}

\begin{Prop} \label{AkProp}
As $N$ tends to infinity,
$$P(Z_{\mathcal{T}} > \mathcal{W}) \rightarrow 1.$$
\end{Prop}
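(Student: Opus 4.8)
The plan is to exploit the explicit law of $Z_{\mathcal{T}}$ recorded in \eqref{popEq}. Conditioned on survival the process is geometrically distributed, so summing the geometric tail gives, for any integer $k \geq 1$,
$$P(Z_{\mathcal{T}} \geq k) = (1-f(\mathcal{T}))\sum_{i=k}^\infty (1-g(\mathcal{T}))g(\mathcal{T})^{i-1} = (1-f(\mathcal{T}))\,g(\mathcal{T})^{k-1}.$$
Taking $k = \lfloor\mathcal{W}\rfloor+1$, so that $\{Z_{\mathcal{T}} > \mathcal{W}\} = \{Z_{\mathcal{T}} \geq k\}$, reduces the proposition to showing that both the survival factor $1-f(\mathcal{T})$ and the geometric factor $g(\mathcal{T})^{\lfloor\mathcal{W}\rfloor}$ tend to $1$ as $N$ tends to infinity.

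First I would dispose of the survival factor. Since $w \to \infty$ while $d$ is fixed (so $w > d$ for large $N$) and $(w-d)\mathcal{T} = \log\log N - d\mathcal{T} \to \infty$, the quantity $e^{(w-d)\mathcal{T}}$ diverges, and from \eqref{ffun}
$$f(\mathcal{T}) = \frac{d(e^{(w-d)\mathcal{T}}-1)}{we^{(w-d)\mathcal{T}}-d} \longrightarrow 0,$$
because the ratio is asymptotic to $d/w \to 0$. Hence $1-f(\mathcal{T}) \to 1$.

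Next I would estimate the geometric factor, which is where the particular choices of $\mathcal{W}$ and $\mathcal{T}$ enter. From \eqref{gfun} and the identities $e^{w\mathcal{T}} = \log N$ and $\mathcal{T} \to 0$ established in the proof of Proposition \ref{branchProp},
$$1 - g(\mathcal{T}) = \frac{w-d}{we^{(w-d)\mathcal{T}}-d} \sim e^{-(w-d)\mathcal{T}} = \frac{e^{d\mathcal{T}}}{e^{w\mathcal{T}}} \sim \frac{1}{\log N}.$$
Consequently $\log g(\mathcal{T}) = -(1+o(1))/\log N$, so that
$$g(\mathcal{T})^{\lfloor\mathcal{W}\rfloor} = \exp\!\left(\lfloor\mathcal{W}\rfloor\log g(\mathcal{T})\right) = \exp\!\left(-(1+o(1))\frac{\mathcal{W}}{\log N}\right),$$
and since $\mathcal{W}/\log N = 1/(8\log\log N) \to 0$, this tends to $1$. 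Combining the two limits yields $P(Z_{\mathcal{T}} > \mathcal{W}) \to 1$.

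The only delicate point is the balance in the last display: the geometric tail parameter $1-g(\mathcal{T})$ is of order $1/\log N$, so the tail probability would collapse to $0$ if $\mathcal{W}$ were of order $\log N$. It survives precisely because $\mathcal{W}$ is of the strictly smaller order $\log N/\log\log N$, which forces $\mathcal{W}\,(1-g(\mathcal{T})) \to 0$. Verifying this balance, and making the asymptotic equivalences above uniform enough to pass to the stated limits, is the main thing to check carefully; everything else is bookkeeping with the closed forms \eqref{ffun} and \eqref{gfun}.
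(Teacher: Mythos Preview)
Your argument is correct and follows essentially the same route as the paper: compute $P(Z_{\mathcal{T}}>\mathcal{W})=(1-f(\mathcal{T}))g(\mathcal{T})^{\mathcal{W}}$ from the geometric law \eqref{popEq}, then show separately that $f(\mathcal{T})\to 0$ and that $\mathcal{W}(1-g(\mathcal{T}))\to 0$ via $\mathcal{W}/e^{w\mathcal{T}}=\mathcal{W}/\log N=1/(8\log\log N)\to 0$. The only cosmetic difference is your explicit handling of the floor of $\mathcal{W}$, which the paper elides.
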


\begin{proof}
By Equation (\ref{popEq}) we have
\begin{align*}
P(Z_s \leq \mathcal{W} | Z_0 = 1) & = f(s) + (1-f(s))(1-g(s))\sum_{i=1}^\mathcal{W} g(s)^{i-1} \\
& = f(s)+(1-f(s))(1-g(s)^\mathcal{W}).
\end{align*}
Therefore,
\begin{align*}
P(Z_\mathcal{T} > \mathcal{W} | Z_0 = 1) & = (1-f(\mathcal{T}))g(\mathcal{T})^\mathcal{W} \\
& = (1-f(\mathcal{T}))\left(1-\frac{w-d}{we^{(w-d)\mathcal{T}}-d}\right)^\mathcal{W}. \\
\end{align*}
As $N$ tends to infinity $1-f(\mathcal{T})$ tends to 1. For $g(\mathcal{T})^\mathcal{W}$ to tend to 1 it is enough to have
$$\frac{(w-d)\mathcal{W}}{we^{(w-d)\mathcal{T}}-d} \rightarrow 0.$$
Because
$$\lim_{N \rightarrow \infty} \frac{\mathcal{W}}{e^{w\mathcal{T}}} = \lim_{N \rightarrow \infty} \frac{1}{8\log \log N} = 0$$
we have
$$\lim_{N \rightarrow \infty} \frac{(w-d)\mathcal{W}}{we^{(w-d)\mathcal{T}}-d} = 0.$$
\end{proof}

\begin{Lemma} \label{smallWLem}
Let $\tau$ be a stopping time and $\Gamma \subset \{1,2,\dots,N\}$ such that $|\Gamma| \geq N/2$. Let $A_i$ be the event that individual $i$ gets at least $2\mathcal{W}$ beneficial mutation events over the time interval $[\tau,\tau+\mathcal{T}]$. Then
$$\lim_{N \rightarrow \infty}P\left(\sum_{i \in \Gamma} 1_{A_i} \geq 2\right) = 1.$$
\end{Lemma}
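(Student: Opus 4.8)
The plan is to condition on $\mathcal{F}_\tau$ so that the indicators $1_{A_i}$, $i \in \Gamma$, become independent, to establish a lower bound of the form $N p_N \to \infty$ for the common success probability $p_N$, and then to read off the conclusion from a standard binomial tail estimate.

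First I would record the conditional law of the mutation counts. For each coordinate $i$ the beneficial-mutation events arrive as a rate-$q\mu$ Poisson process that is part of the driving noise and is independent of the resampling and selection clocks and across coordinates. Hence, by the strong Markov property at the stopping time $\tau$, conditional on $\mathcal{F}_\tau$ the number of beneficial mutations received by coordinate $i$ on $[\tau,\tau+\mathcal{T}]$ is $\mathrm{Poisson}(q\mu\mathcal{T})$, these counts are independent across $i$, and they are independent of $\mathcal{F}_\tau$ (hence of $\Gamma$, which is determined by time $\tau$). Writing $Y \sim \mathrm{Poisson}(q\mu\mathcal{T})$ and $p_N = P(Y \geq 2\mathcal{W})$, the family $\{1_{A_i} : i \in \Gamma\}$ is therefore conditionally i.i.d. $\mathrm{Bernoulli}(p_N)$, so $S := \sum_{i \in \Gamma} 1_{A_i}$ is conditionally $\mathrm{Binomial}(|\Gamma|,p_N)$.

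The heart of the argument is a lower bound on $p_N$. Setting $\lambda = q\mu\mathcal{T}$ and $k = \lceil 2\mathcal{W}\rceil$, I would discard all but the leading term of the tail, $p_N \geq P(Y=k) = e^{-\lambda}\lambda^k/k!$, and estimate the right-hand side with Stirling's formula, giving
$$\log p_N \;\geq\; -\lambda + k\log\lambda - k\log k + k - O(\log\log N).$$
With $\lambda = \tfrac{16 q\mu}{\gamma}\,(\log\log N)^2/\log N$ and $k \sim \tfrac{\log N}{4\log\log N}$ one has $\log\lambda = -(1+o(1))\log\log N$ and $\log k = (1+o(1))\log\log N$, so the two dominant contributions $k\log\lambda$ and $-k\log k$ are each $-(1+o(1))\tfrac{\log N}{4}$ and
$$\log p_N \;=\; -\tfrac12(1+o(1))\log N .$$
The constants $16$ and $8$ in $\mathcal{T}$ and $\mathcal{W}$ are chosen precisely so that the leading exponent is $-\tfrac12\log N$, comfortably larger than $-\log N$; in particular $p_N \geq N^{-3/4}$ for all large $N$. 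This Poisson tail estimate is the main obstacle, since the whole lemma hinges on the per-coordinate probability being only polynomially small rather than of order $N^{-1}$ or smaller.

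Finally I would conclude with the binomial tail. Conditionally on $\mathcal{F}_\tau$ the mean of $S$ is $m := |\Gamma| p_N \geq \tfrac{N}{2}p_N \geq \tfrac12 N^{1/4}$, which tends to infinity, and for $N$ large enough that $p_N \leq \tfrac12$,
$$P(S \leq 1 \mid \mathcal{F}_\tau) = (1-p_N)^{|\Gamma|}\!\left(1 + \frac{|\Gamma|p_N}{1-p_N}\right) \leq e^{-m}(1+2m).$$
Since $x \mapsto e^{-x}(1+2x)$ is decreasing for $x > \tfrac12$ and $m \geq \tfrac12 N^{1/4}$, the right-hand side is bounded by the deterministic quantity $e^{-N^{1/4}/2}(1+N^{1/4}) \to 0$. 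Taking expectations gives $P(S \geq 2) = 1 - E[P(S \leq 1 \mid \mathcal{F}_\tau)] \to 1$, which is the claim.
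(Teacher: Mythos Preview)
Your proof is correct and follows essentially the same route as the paper's: both invoke the strong Markov property to make the per-individual beneficial-mutation counts on $[\tau,\tau+\mathcal{T}]$ i.i.d.\ $\mathrm{Poisson}(q\mu\mathcal{T})$, lower-bound the tail $p_N$ by the single term $e^{-\lambda}\lambda^k/k!$ with $k=\lceil 2\mathcal{W}\rceil$, and use Stirling together with $\mathcal{W}\mathcal{T}=\tfrac{2\log\log N}{\gamma}$ to conclude $p_N = N^{-1/2+o(1)}$, so that $Np_N\to\infty$. The only cosmetic difference is that the paper splits $\Gamma$ into two halves and shows each contains at least one success via $1-(1-p_N)^{N/4}\to 1$, whereas you bound $P(S\le 1)$ directly for the full binomial; both reductions are equivalent once $|\Gamma|\,p_N\to\infty$.
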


\begin{proof}
First divide $\Gamma$ into two disjoint sets, $\Gamma_1$ and $\Gamma_2$, each of size at least $N/4$. Because $X$ is a strong Markov process and $\tau$ is a stopping time the mutation events of $X$ that occur in the time interval $[\tau, \tau+\mathcal{T}]$ are independent of the events which occur outside of that interval and have the same distribution of those that occur over the interval $[0,\mathcal{T}]$.

Using the lower bound
\begin{align*}
P\left(\sum_{i \in \Gamma} 1_{A_i} \geq 2\right) & = P\left(\sum_{i \in \Gamma_1} 1_{A_i} + \sum_{i \in \Gamma_2} 1_{A_i}\geq 2\right) \\
& \geq P\left(\left(\sum_{i \in \Gamma_1} 1_{A_i} \geq 1\right) \cap \left(\sum_{i \in \Gamma_2} 1_{A_i}\geq 1\right)\right) \\
\end{align*}
it is enough to show
$$\lim_{N \rightarrow \infty} P\left(\sum_{i \in \Gamma_1} 1_{A_i} \geq 1\right) = 1.$$

Beneficial mutations occur on an individual at rate $q\mu$ so the probability that an individual gets $k$ or more beneficial mutations in the time interval $[\tau, \tau+\mathcal{T}]$ is
$$\sum_{j=k}^\infty \frac{(q\mu\mathcal{T})^j e^{-q\mu\mathcal{T}}}{j!}.$$

Because the beneficial mutation events of one individual occur independently of other individuals,
\begin{align*}
P\left(\bigcup_{i \in \Gamma_1} A_i\right) & = 1-\left(1- \sum_{j=\lceil2\mathcal{W}\rceil}^\infty \frac{(q\mu\mathcal{T})^j e^{-q\mu\mathcal{T}}}{j!}\right)^{N/4} \\
& \geq 1-\left(1-\frac{(q\mu\mathcal{T})^{\lceil2\mathcal{W}\rceil} e^{-q\mu\mathcal{T}}}{\lceil2\mathcal{W}\rceil!}\right)^{N/4}.
\end{align*}
It is enough to have
$$\lim_{N \rightarrow \infty}\frac{N(q\mu\mathcal{T})^{\lceil2\mathcal{W}\rceil} e^{-q\mu\mathcal{T}}}{4\lceil2\mathcal{W}\rceil!} = \infty.$$
Applying Stirling's formula and the identity 
$$\mathcal{T} = \frac{2\log\log N}{\gamma\mathcal{W}}$$
it is enough to show
$$\frac{N(2q\mu\log\log N)^{\lceil2\mathcal{W}\rceil} e^{-q\mu\mathcal{T}-\lceil2\mathcal{W}\rceil}}{4(\gamma\mathcal{W})^{\lceil2\mathcal{W}\rceil}(2\pi\lceil2\mathcal{W}\rceil)^{1/2}\lceil2\mathcal{W}\rceil^{\lceil2\mathcal{W}\rceil}}$$
tends to infinity as $N$ does. By dropping the ceiling function in the denominator we get the lower bound
$$\frac{N(2q\mu\log\log N)^{\lceil2\mathcal{W}\rceil} e^{-q\mu\mathcal{T}-\lceil2\mathcal{W}\rceil}}{4(\gamma\mathcal{W})^{\lceil2\mathcal{W}\rceil}(2\pi\lceil2\mathcal{W}\rceil)^{1/2}\lceil2\mathcal{W}\rceil^{\lceil2\mathcal{W}\rceil}} > \frac{N(2q\mu\log\log N)^{\lceil2\mathcal{W}\rceil} e^{-q\mu\mathcal{T}-\lceil2\mathcal{W}\rceil}}{4(2\gamma)^{2\mathcal{W}}(4\pi\mathcal{W})^{1/2}\mathcal{W}^{4\mathcal{W}}}.$$
By definition of $\mathcal{W}$ we have
$$\mathcal{W}^{4\mathcal{W}} = N^{1/2 - \log(8\log\log(N))/2\log\log(N)}$$
so
$$\frac{N(2q\mu\log\log N)^{\lceil2\mathcal{W}\rceil} e^{-q\mu\mathcal{T}-\lceil2\mathcal{W}\rceil}}{4(2\gamma)^{2\mathcal{W}}(4\pi\mathcal{W})^{1/2}\mathcal{W}^{4\mathcal{W}}} >  \frac{N^{1/2 + \log(8\log\log(N))/2\log\log(N)}(2q\mu\log\log N)^{\lceil2\mathcal{W}\rceil} e^{-q\mu\mathcal{T}-\lceil2\mathcal{W}\rceil}}{4(2\gamma)^{2\mathcal{W}}(4\pi\mathcal{W})^{1/2}}$$
which tends to infinity as $N$ tends to infinity.
\end{proof}

\begin{Lemma} \label{brLem}
Let $\Gamma \subset \{1,2,\dots,N\}$ such that $|\Gamma| \geq \mathcal{W}$. Let $s$ be a fixed time and let
$$A_i = \{X_{s+\mathcal{T}}^i - X_s^i \geq 1\}.$$
Then
$$\lim_{N \rightarrow \infty} P\left(\sum_{i \in \Gamma} 1_{A_i} \geq 2\right) = 1.$$
\end{Lemma}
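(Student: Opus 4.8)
The plan is to replace the event $A_i$, which concerns the \emph{net} fitness change of individual $i$, by a simpler sufficient event $C_i$ that is measurable with respect to individual $i$'s own mutation and resampling clocks alone. Define $C_i$ to be the event that, during $[s,s+\mathcal{T}]$, individual $i$ receives at least one beneficial mutation, no deleterious mutation, and no resampling death. The point is that on $C_i$ the only transitions that can affect $X^i$ are beneficial mutations, which add $1$, and selection births $X^i \to X^j$, which by the transition rate $\frac{\gamma}{N}(X^j-X^i)\vee 0$ replace $X^i$ only by a strictly larger value. Hence on $C_i$ the coordinate $X^i$ is nondecreasing throughout $[s,s+\mathcal{T}]$ and jumps up at least once, so $X_{s+\mathcal{T}}^i \geq X_s^i + 1$; that is, $C_i \subseteq A_i$. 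Consequently $\sum_{i\in\Gamma}1_{C_i}\leq \sum_{i\in\Gamma}1_{A_i}$, and it suffices to show $P(\sum_{i\in\Gamma}1_{C_i}\geq 2)\to 1$.

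The key structural observation is that the events $(C_i : i \in \Gamma)$ are mutually independent. Each $C_i$ depends only on the rate-$q\mu$ beneficial mutation clock of $i$, the rate-$(1-q)\mu$ deleterious mutation clock of $i$, and the resampling clock of $i$; crucially the resampling deaths of distinct individuals are driven by the disjoint families of Poisson clocks governing $X^i\to X^j$ and $X^{i'}\to X^j$, so they are independent across individuals, and no reference to the selection mechanism (the only source of dependence between coordinates) enters $C_i$. Because $s$ is a fixed time, each clock is a genuine time-homogeneous Poisson process over $[s,s+\mathcal{T}]$, so
$$P(C_i) \geq \left(1-e^{-q\mu\mathcal{T}}\right)e^{-((1-q)\mu+1)\mathcal{T}}.$$
Since $\mathcal{T}\to 0$, the second factor tends to $1$ and $1-e^{-q\mu\mathcal{T}}$ is asymptotic to $q\mu\mathcal{T}$, so $P(C_i)\geq p:=\tfrac12 q\mu\mathcal{T}$ for all large $N$. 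With $m=|\Gamma|\geq\mathcal{W}$, the identity $\mathcal{T}=2\log\log N/(\gamma\mathcal{W})$ gives $mp\geq \mathcal{W}\cdot\tfrac12 q\mu\mathcal{T}=q\mu\log\log N/\gamma$, which tends to infinity.

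Finally I would conclude by a Poisson-type bound on the sum of independent indicators. Setting $\lambda=\sum_{i\in\Gamma}P(C_i)\geq mp$ and writing $p_i=P(C_i)$, the exact identity for independent Bernoullis together with $\prod_i(1-p_i)\leq e^{-\lambda}$ and $\max_i p_i\to 0$ yields
$$P\Big(\sum_{i\in\Gamma}1_{C_i}\leq 1\Big)=\prod_i(1-p_i)\Big(1+\sum_i\tfrac{p_i}{1-p_i}\Big)\leq e^{-\lambda}(1+2\lambda),$$
which tends to $0$ because $\lambda\to\infty$. Hence $P(\sum_{i\in\Gamma}1_{A_i}\geq 2)\geq P(\sum_{i\in\Gamma}1_{C_i}\geq 2)\to 1$. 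Equivalently, one may split $\Gamma$ into two halves each of size at least $\mathcal{W}/2$ and show, exactly as in Lemma \ref{smallWLem}, that each half contains a success with probability tending to $1$, then use independence of the two halves. The main obstacle is the independence step: one must design the sufficient event $C_i$ to exclude the selection mechanism entirely, since that is the only mechanism coupling different coordinates, and verify that resampling deaths of distinct individuals are independent; once $C_i$ has been isolated to individual $i$'s own clocks, the remaining estimates are routine.
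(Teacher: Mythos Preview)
Your proof is correct and follows essentially the same approach as the paper: both replace $A_i$ by the sufficient event ``at least one beneficial mutation, no deleterious mutation, no resampling death,'' use that these events are independent across individuals, and show that $\mathcal{W}\mathcal{T}\to\infty$ forces at least two successes. Your version is in fact more careful in isolating the event $C_i$ and justifying independence; the paper does this implicitly and uses the halving argument you mention at the end rather than the direct tail bound on $\sum 1_{C_i}$.
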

\begin{proof}
Because $|\Gamma| > \mathcal{W}$ we can partition $\Gamma$ as $\Gamma = \Gamma_1 \cup \Gamma_2$ where $|\Gamma_1| > \mathcal{W}/2$ and $|\Gamma_2| > \mathcal{W}/2$. Using the lower bound
\begin{align*}
P\left(\sum_{i \in \Gamma} 1_{A_i} \geq 2\right) & = P\left(\sum_{i \in \Gamma_1} 1_{A_i} + \sum_{i \in \Gamma_2} 1_{A_i}\geq 2\right) \\
& \geq P\left(\left(\sum_{i \in \Gamma_1} 1_{A_i} \geq 1\right) \cap \left(\sum_{i \in \Gamma_2} 1_{A_i}\geq 1\right)\right) \\
\end{align*}
it is enough to show
$$\lim_{N \rightarrow \infty} P\left(\sum_{i \in \Gamma_1} 1_{A_i} \geq 1\right) = 1.$$

Because $X$ is a Markov process the rates of birth, death and mutation are the same as if the process were started at time 0. Individuals get beneficial mutations at rate $q\mu$. Events on which individuals may decrease in fitness occur at rate $d$. Because the event $A_i$ occurs if individual $i$ gets at least one beneficial mutation and has no deleterious mutations or deaths due to resampling events,
$$P(A_i) \geq (1-e^{-q\mu\mathcal{T}})e^{-d\mathcal{T}}$$
for all $i$. Mutation events and death due to resampling occurs on individuals independently. Therefore,
$$P\left(\sum_{i \in \Gamma_1} 1_{A_i} \geq 1\right) \geq 1-(1-(1-e^{-q\mu\mathcal{T}})e^{-d\mathcal{T}})^{\mathcal{W}/2}.$$

To finish the proof it is enough to have
$$\lim_{N \rightarrow \infty} \frac{\mathcal{W}}{2}(1-e^{-q\mu\mathcal{T}}) = \infty.$$
Using the bound $1-e^{-x} > x-x^2/2$ we have
$$\frac{\mathcal{W}}{2}(1-e^{-q\mu\mathcal{T}}) > \frac{2\mathcal{W}q\mu\mathcal{T} - \mathcal{W}(q\mu\mathcal{T})^2}{2}.$$
Since
$$\mathcal{W}\mathcal{T} = \frac{2\log\log N}{\gamma}$$
we have
$$\lim_{N \rightarrow \infty} \frac{\mathcal{W}}{2}(1-e^{-q\mu\mathcal{T}}) = \infty.$$
\end{proof}

\begin{Prop} \label{PEx}
Let $\epsilon > 0$. For any $K \geq 0$ there exists $N_0$ such that for $N \geq N_0$ we have
$$P(\mathcal{E}|\mathcal{P} = x) > 1-\epsilon$$
for all $x \in \Lambda_K$.
\end{Prop}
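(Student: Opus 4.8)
The plan is to show that, on $\Lambda_K$, the population lifts two individuals above the line during the first interval and then never lets the maximum fall back, the point being that the line is far below what a single sweep produces. Indeed $M<t/(2\mathcal T)$ gives $Ms/2\le Mt/2<t^2\gamma\log N/(64(\log\log N)^2)$ for every $s\in[\mathcal T,t]$, a quantity of order $\log N/(\log\log N)^2$, whereas the height I will manufacture and preserve is $2\mathcal W$, of order $\log N/\log\log N$. So for large $N$ one has $2\mathcal W>Ms/2$ on all of $[\mathcal T,t]$, and it suffices to keep at least one individual of fitness $\ge 2\mathcal W$ present at every time in $[\mathcal T,t]$. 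The factor-$\log\log N$ gap between $2\mathcal W$ and the line is what will absorb the error probabilities of the coupling.

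For the base case I would invoke Lemma \ref{smallWLem} with $\Gamma=\{1,\dots,N\}$ and $\tau=0$: with probability tending to $1$ two individuals collect at least $2\mathcal W$ beneficial mutations on $[0,\mathcal T]$, and because fitness-lowering events occur only at the fixed rate $d$ over a window of length $\mathcal T\to0$, both reach fitness $\ge 2\mathcal W$ while $W_\cdot$ is still near its starting value. Since $x\in\Lambda_K$ has no point of $\mathcal P$ in $[0,s_1]$, neither of these two is hit by a deleterious mutation or a resampling death on this interval, so at $s_1$ I take $\alpha(1),\beta(1)$ to be these individuals, each of fitness $\ge 2\mathcal W$, with $X_{s_1}^{\beta(1)}-W_s\ge\mathcal W$ on $[s_1,s_1+\mathcal T]$.

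The induction rests on one structural fact: on $\Lambda_K$ a labelled individual is hit by neither a deleterious mutation nor a resampling death between points of $\mathcal P$, so while it holds its label its fitness is nondecreasing. I would carry as inductive hypothesis that at $s_i$ both $\alpha(i)$ and $\beta(i)$ have fitness $\ge 2\mathcal W$ and $\tau(i)\ge s_i+\mathcal T$. On an interval containing no point of $\mathcal P$ both labelled individuals retain their fitness, so $X_s^+\ge 2\mathcal W$ throughout and the hypothesis passes on for free. As $x\in\Lambda_K$ has at most $K$ points in total and at most one per interval, only $\le K$ intervals fail to be of this kind, and in each of them only one of $\alpha(i),\beta(i)$ is struck; the survivor then has $\tau_\alpha(i)\ge s_{i+1}$, so coupling its progeny with $\mathcal Z^\alpha(i)$ on $[s_i,s_i+\mathcal T)$ and applying Proposition \ref{AkProp} produces more than $\mathcal W$ individuals of fitness $\ge 2\mathcal W$ at $s_i+\mathcal T$, and Lemma \ref{brLem} over $[s_i+\mathcal T,s_{i+1}]$ carries at least two of them, of fitness $\ge 2\mathcal W$, to $s_{i+1}$. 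The crucial gain is that the branching step needs to succeed only in these $\le K$ disrupted intervals, so the total error is a sum of at most $K+1$ terms each tending to $0$, and the $M\to\infty$ intervals never enter a union bound.

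Two checks remain. Continuity costs nothing here: since in every interval at least one labelled individual is never struck, its nondecreasing fitness keeps $X_s^+\ge 2\mathcal W\ge Ms/2$ at all $s$, not merely on the grid. The genuine difficulty is certifying $\tau(i)\ge s_i+\mathcal T$, that is $X_{s_i}^{\beta(i)}-W_s\ge\mathcal W$ throughout each coupling window, for this is exactly where the argument couples the high-fitness front to the bulk: one must bound the rise of the median $W_\cdot$ from above and show it stays below $\mathcal W$ on $[0,t]$ — equivalently, that at no time in $[0,t]$ do $N/2$ or more individuals have fitness at least $\mathcal W$ — so that the $2\mathcal W$ lead is never eaten away. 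This upper control on the bulk, which is not delivered by any of the preliminary estimates and runs opposite to the lower bounds of Lemmas \ref{smallWLem} and \ref{brLem}, is the step I expect to be the main obstacle; once it is in hand, $\Theta_s^j\ge N\mathcal W/2$ legitimises the birth rate $w=\gamma\mathcal W/2$ in $\mathcal Z$ and the coupling of Proposition \ref{AkProp} is valid, completing the estimate $P(\mathcal E\mid\mathcal P=x)>1-\epsilon$.
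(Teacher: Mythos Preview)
Your overall strategy differs from the paper's and, as you yourself flag, leaves a genuine gap. You aim to manufacture fitness $\ge 2\mathcal W$ once and then merely preserve it, exploiting the inequality $2\mathcal W\gg Mt/2$; this would indeed suffice for $\mathcal E$, and you are right that only the $\le K$ intervals containing a point of $x$ need any work. But the repair step on those intervals requires the coupling with $\mathcal Z^\alpha(i)$ to be valid on $[s_i,s_i+\mathcal T)$, i.e.\ $\tau(i)\ge s_i+\mathcal T$, and you correctly identify that this needs an \emph{upper} bound on the median $W_s$ that none of the preliminary results supplies. This is not a minor technicality: controlling the bulk from above over all of $[0,t]$ is essentially an upper bound on the rate of adaptation, a separate and nontrivial problem.

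The paper sidesteps this entirely by a dichotomy you have not used. It does not try to keep $W_s$ small; instead it splits each interval according to whether $\tau(i)<s_i+\mathcal T$ or not. If $\tau(i)<s_i+\mathcal T$, the rise of the median is turned into an opportunity: by definition of $\tau(i)$ there are now at least $N/2$ individuals with fitness $\ge X_{s_i}^{\beta(i)}-\mathcal W$, so Lemma \ref{smallWLem} applied to this set on $[\tau(i),\tau(i)+\mathcal T]$ yields two individuals above $X_{s_i}^{\beta(i)}$. If $\tau(i)\ge s_i+\mathcal T$, the coupling is legitimate and Proposition \ref{AkProp} together with Lemma \ref{brLem} does the work. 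Either way $X_{s_{i+1}}^{\beta(i+1)}>X_{s_i}^{\beta(i)}$ with probability $p\to1$, and the paper accumulates these unit increments over the $M$ intervals (using $p^K$ on the bad ones and a Chernoff bound on the good ones) rather than reaching $2\mathcal W$ in one shot. The point is that the paper never needs $W_s\le\mathcal W$: the event $\{\tau(i)<s_i+\mathcal T\}$ is handled, not excluded.

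A minor correction: in your base case you write that the two individuals produced by Lemma \ref{smallWLem} avoid deleterious mutations and resampling deaths because $|x\cap[0,s_1]|=0$. That is not right: $\mathcal P$ records such events only for the labelled individuals $\alpha(0),\beta(0)$, not for the two (random) individuals that happen to accumulate $2\mathcal W$ beneficial mutations. The conclusion can be recovered since each fixed individual avoids such events on $[0,2\mathcal T]$ with probability $e^{-2d\mathcal T}\to1$, but the reason you give is wrong.
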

\begin{proof}
Fix an integer $0 \leq i \leq M$. Let $\sigma_\alpha(i) = (s_i+\mathcal{T}) \wedge \tau_\alpha(i) \wedge \tau(i)$ and let $\sigma_\beta(i) = (s_i+\mathcal{T}) \wedge \tau_\alpha(i) \wedge \tau(i)$. By definition of $\Lambda_K$, if $x \in \Lambda_K$ then either $\sigma_\alpha(i) \neq \tau_\alpha(i)$ or $\sigma_\beta(i) \neq \tau_\beta(i)$. Therefore, at least one of the following must occur:
\begin{enumerate}
\item $\tau(i) < s_i+\mathcal{T}$,
\item $\sigma_\alpha(i) = s_i+\mathcal{T}$ or
\item $\sigma_\beta(i) = s_i+\mathcal{T}$.
\end{enumerate}

First suppose $\tau(i) < s_i + \mathcal{T}$. Then, for $x \in \Lambda_K$, at least one of $\sigma_\alpha(i) = \tau(i)$ or $\sigma_\beta(i) = \tau(i)$. Without loss of generality suppose $\sigma_\alpha(i) = \tau(i)$. At time $\tau(i)$ there are at least $N/2$ individuals with fitness $X_{s_i}^{\beta(i)}-\mathcal{W}$ or higher. Let $\Gamma$ be the set of all individuals that at time $\tau(i)$ have fitness $X_{s_i}^{\beta(i)}-\mathcal{W}$ or higher. By Lemma \ref{smallWLem}, with probability tending to 1 there will be two individuals of $\Gamma$ that receive $2\mathcal{W}$ beneficial mutations by time $\tau_i+\mathcal{T}$.  Both of these individuals have fitness greater than $X_{s_i}^\beta(i)$ at time $\tau_i+\mathcal{T}$. The probability that these individuals get a deleterious mutation event or decrease due to a resampling event over the time interval $[s_i,s_{i+1}]$ is $(1-e^{-2d\mathcal{T}})$ which tends to 0 as $N$ tends to infinity. Therefore, with probability $p_1$ which tends to 1 as $N$ tends to infinity we have $X_{s_{i+1}}^{\beta(i+1)} > X_{s_i}^{\beta(i)}$.

If $\tau(i) > s_i + \mathcal{T}$ then, because $x \in \Lambda_K$, either $\sigma_\alpha(i) = s_i+\mathcal{T}$ or $\sigma_\beta(i) = s_i+\mathcal{T}$. Without loss of generality, suppose $\sigma_\alpha(i) = s_i+\mathcal{T}$. The coupling of $X$ with $\mathcal{Z}^\alpha$ holds for the time interval $[s_i,s_i+\mathcal{T}]$ so by Proposition \ref{AkProp}, with probability tending to 1 there will be at least $\mathcal{W}$ individuals with fitness in $[X_{s_i}^{\alpha(i)},\infty)$ at time $s_i+\mathcal{T}$. By Lemma \ref{brLem} at least two of those individuals will increase by at least 1 by time $s_{i+1}$. Since the probability of either of those individuals decreasing in fitness over the time interval $[s_i+\mathcal{T},s_{i+1}]$ is $(1-e^{-d\mathcal{T}})^2$ which tends to 0 as $N$ tends to infinity, with probability $p_2$ tending to 1 we have $X_{s_{i+1}}^{\beta(i+1)} > X_{s_i}^{\alpha(i)} \geq X_{s_i}^{\beta(i)}$.

Let $p = p_1 \wedge p_2$. Because there is only one point of $x$ over any time interval $[s_i,s_{i+1})$ we have $X_s^+ \geq X_{s_i}^{\beta(i)}$ for all $s \in [s_i,s_{i+1})$. This is where using the second highest fitness is important as we ensure that $X_s^+$ cannot decrease too far even when the maximum fitness class is lost. Since $x \in \Lambda_K$ we have $|x \cap [0,s_{M+1}]| = k \leq K$. The probability that $X_{s_{i+1}}^{\beta(i+1)} > X_{s_i}^{\beta(i)}$ on the $k$ intervals in which there is a point of $x$ is at least $p^K$. On an interval over which there is no point of $x$, we have $X_{s_{i+1}}^{\beta(i+1)} \geq X_{s_i}^{\beta(i)}$. Therefore, it is allowable for the fitness to fail to increase over some of the intervals on which there is no point of $x$. If $X_{s_{i+1}}^{\beta(i+1)} > X_{s_i}^{\beta(i)}$ for at least half of the values $i$ such that $|x \cap [s_i,s_{i+1})| = 0$ then $X^+$ must have increased by at least $(M-k)/2+k = (M+k)/2 > M/2$ for large enough values of $N$. Because the events $X_{s_{i+1}}^{\beta(i+1)} > X_{s_i}^{\beta(i)}$ are independent and the probability they occur is tending to 1, we can apply the Chernoff bound to get
$$P(\mathcal{E}|\mathcal{P} = x) > p^K(1- e^{-((M-K)/(2p)) \cdot (p-1/2)^2}).$$
As $N$ tends to infinity, $M$ tends to infinity and $p$ tends to 1, yielding the result.
\end{proof}

\begin{proof}[Proof of Theorem \ref{mainTheo}]
Let $\epsilon > 0$ and let $\delta > 0$ such that $(1-\delta)^2 > 1-\epsilon$. By Proposition \ref{PLk} there exists $K$ such that for $N$ large enough we have 
\begin{equation} \label{Peq}
P(\mathcal{P} \in \Lambda_K) > 1-\delta.
\end{equation}
We use the bound
$$P(\mathcal{E}) \geq P(\{\mathcal{P} \in \Lambda_K\} \cap \mathcal{E}) = E[1_{\{\mathcal{P} \in \Lambda_K\}}1_\mathcal{E}].$$
The set $\Lambda_K \in \mathcal{G}$ so $\{\mathcal{P} \in \Lambda_K\} \in \sigma(\mathcal{P})$. By definition of conditional expectation we have
$$E[1_{\{\mathcal{P} \in \Lambda_K\}} 1_\mathcal{E}] = E[1_{\{\mathcal{P} \in \Lambda_K\}} P(\mathcal{E}|\mathcal{P})].$$
By definition of expectation we can expand the expression as
$$E[1_{\{\mathcal{P} \in \Lambda_K\}} P(\mathcal{E}|\mathcal{P})] = \int_\Omega 1_{\Lambda_K}(\mathcal{P}(\omega)) P(\mathcal{E}|\mathcal{P} = \mathcal{P}(\omega)) dP(\omega).$$
For $G \in \mathcal{G}$ let
$$\mu_{\mathcal{P}}(G) = P(\mathcal{P} \in G).$$
Then we can push forward the integral to obtain
\begin{align*}
\int_\Omega 1_{\Lambda_K}(\mathcal{P}(\omega)) P(\mathcal{E}|\mathcal{P} = \mathcal{P}(\omega)) dP(\omega) & = \int_G 1_{\Lambda_K}(x) P(\mathcal{E} | \mathcal{P} = x) d\mu_{\mathcal{P}}(x) \\
& = \int_{\Lambda_K} P(\mathcal{E}|\mathcal{P} = x) d\mu_{\mathcal{P}}(x).
\end{align*}

By Proposition \ref{PEx} there exists $N_0$ such that if $N \geq N_0$ then $P(\mathcal{E}|\mathcal{P} = x) > 1-\delta$ for all $x \in \Lambda_K$. Hence
\begin{align*}
\int_{\Lambda_K} P(\mathcal{E}|\mathcal{P} = x) d\mu_{\mathcal{P}}(x) & \geq \int_{\Lambda_K} (1-\delta) d\mu_{\mathcal{P}}(x) \\
& = (1-\delta)\mu_{\mathcal{P}}(\Lambda_K) \\
& = (1-\delta) P(\mathcal{P} \in \Lambda_K) \\
& \geq (1-\delta)^2 \mbox{ by Equation \ref{Peq}}\\
& > 1-\epsilon.
\end{align*}

Therefore, for any $\epsilon > 0$ there exists $N_0$ such that if $N \geq N_0$ then $P(\mathcal{E}) \geq 1-\epsilon$ which completes the proof.
\end{proof}

\section{Acknowledgements}

I would like to thank my graduate advisor, Jason Schweinsberg, for introducing me to this model while I was pursuing my doctoral degree at U.C. San Diego. My ability to solve this problem undoubtedly stems from the many conversations we had about this model during the time I was there.

\end{document}